\documentclass[11pt]{article}
\newlength{\dinwidth}
\newlength{\dinmargin}
\setlength{\dinwidth}{21.0cm}
\textheight22.2cm   
\textwidth17.0cm
\setlength{\dinmargin}{\dinwidth}
\addtolength{\dinmargin}{-\textwidth}
\setlength{\dinmargin}{0.5\dinmargin}
\oddsidemargin -1.0in
\addtolength{\oddsidemargin}{\dinmargin}
\setlength{\evensidemargin}{\oddsidemargin}
\setlength{\marginparwidth}{0.9\dinmargin}
\marginparsep 8pt \marginparpush 5pt
\topmargin -12pt
\headheight 12pt
\headsep 30pt
\footskip 24pt

\usepackage{amsmath,amssymb,amsfonts}
\usepackage[alphabetic,y2k,initials]{amsrefs}

\usepackage{color, graphics}
\usepackage{float}
\usepackage{tikz}
\usepackage{pdfsync}
\usepackage{hyperref}
\usepackage{multirow}

\usepackage{longtable}

\usepackage{calrsfs}
\DeclareMathAlphabet{\pazocal}{OMS}{zplm}{m}{n}

\newcommand{\Q}{\pazocal{Q}}
\newcommand{\E}{\pazocal{E}}

\newcommand{\Pol}{\mathcal{P}}
\newcommand{\T}{\mathcal{T}}

\newcommand{\refeq}{\eqref}


\newtheorem{theorem}{Theorem}[section]

\newtheorem{corollary}[theorem]{Corollary}
\newtheorem{lemma}[theorem]{Lemma}

\newtheorem{example}[theorem]{Example}

\newenvironment{proof}[1][Proof]{\noindent\textit{#1.} }{\hfill$\Box$\newline\medskip}

\numberwithin{equation}{section}

\setcounter{tocdepth}{2}

\usepackage{authblk}
\author[1]{George E. Andrews}
\author[2,4]{Vladimir Dragovi\'c}
\author[3,4]{Milena Radnovi\'c}
\affil[1]{\textsc{Penn State University, Department Of Mathematics}}
\affil[2]{\textsc{The University of Texas at Dallas, Department Of Mathematical Sciences}}
\affil[3]{\textsc{The University of Sydney, School of Mathematics and Statistics}}
\affil[4]{\textsc{Mathematical Institute SANU, Belgrade}}
\affil[ ]{\texttt{gea1@psu.edu, vladimir.dragovic@utdallas.edu, milena.radnovic@sydney.edu.au}}

\date{}

\title{Combinatorics of periodic ellipsoidal billiards}

\begin{document}

\maketitle

\begin{abstract}
We study combinatorics of billiard partitions which arose recently in the description of periodic trajectories of ellipsoidal billiards in $d$-dimensional Euclidean and pseudo-Euclidean spaces. Such partitions uniquely codify the sets of caustics, up to their types, which generate
periodic trajectories. The period of  a periodic trajectory is the largest part while the winding numbers are the remaining summands of the corresponding partition. In order to take into account the types of caustics as well, we introduce weighted partitions. We provide closed forms for the generating functions of these partitions.

\

\noindent
\textsc{MSC2010 numbers}: \texttt{05A15, 05A17, 14H70, 37J35, 26C05}
\newline
\textsc{Keywords}: Euclidean billiard partitions, space-type partitions, time-type partitions, light-type partitions, irreducible partitions,  weighted Euclidean billiard partitions, generating functions

\end{abstract}
	
\tableofcontents

\section{Introduction}
The aim of this paper is to study combinatorics of billiard partitions which arose recently in the description of periodic trajectories of ellipsoidal billiards in $d$-dimensional Euclidean and pseudo-Euclidean spaces. Such partitions uniquely codify the sets of caustics, up to their types, which generate
periodic trajectories. The period of  a periodic trajectory is the largest part while the winding numbers are the remaining summands of the corresponding partition. This correspondence and the main properties of the partitions in Euclidean case were established in the recent paper \cite{DragRadn2018}. The key moment in \cite{DragRadn2018} was relating the periodicity condition for billiard trajectories in $d$-dimensional space with the Pell functional-polynomial equation for extremal polynomials on $d$ intervals on the real line. In this relationship, the winding numbers of a periodic trajectory correspond to the numbers of internal extremal points of an extremal polynomial on nested intervals, see Theorem \ref{th:KLN} on $(E, m)$-representations below, where $E$ is the union of $d$ intervals and $m$ is the period of a billiard trajectory.In order to take into account the types of caustics as well, using Lemma \ref{lemma:audin} below, we introduce weighted partitions. We provide closed forms for the generating functions of such partitions, see Theorem \ref{th:generatingE}. Section \ref{section:PE} is devoted to the study of partitions related to ellipsoidal billiards in pseudo-Euclidean spaces and we distinguish space-type, time-type, and light-type partitions. We provide the  generating functions for those partitions in Theorem \ref{th:generatingPE}.

\subsection{$(E, m)$-representation}

For $e_0<e_1<f_1<e_2<\dots < e_g<f_g<f$, we suppose a polynomial $P_{2n}(z)$ of degree $2n$ is given, such that it is positive on $E=[e_0, f]\setminus \cup_{j=1}^g(e_j, f_j)$.
We say that such a polynomial admits \emph{an $(E, m)$-representation} if it can be represented in the form
\begin{equation}\label{eq:genPell}
P_{2n}(z)=A_{m}^2(z) + (z-e_0)(f-z)\prod_{j=1}^g\left((z-e_j)(z-f_j)\right)\cdot B_{m-g-1}^2(z),
\end{equation}
where $A_m$ and $B_{m-g-1}$ are polynomials of degrees $m$ and $m-g-1$ respectively, such that all zeros of $A_m(z)$ and
$(z-e_0)(z-f_1)\cdots(z-f_g)B_{m-g-1}(z)$ are simple, belong to $E$ and the zeros of the two polynomials interlace.

\begin{theorem}[Krein, Levin, Nudelman]\label{th:KLN}
A polynomial $P_{2n}(z)$ of degree $2n$, which is positive on $E=[e_0, f]\setminus \cup_{j=1}^g(e_j, f_j)$, admits an $(E, m)$-representation if
and only if the numbers $w_1$, \dots, $w_g$ determined by the relations:
$$
\frac{1}{2\pi}\int_E\frac{x^j \ln P_{2n}(x)}{\sqrt{T(x)}}d\,x=\sum_{k=1}^g(-1)^kw_k\int_{e_k}^{f_k}\frac{x^j}{\sqrt{|T(x)|}}d\,x+(-1)^{g+1}m\int_f^\infty \frac{x^j}{\sqrt{|T(x)|}}d\,x,
\quad
0\le j\le g-1,
$$
are positive integers.

If that is satisfied, then $w_k$ will be the number of zeros of $B_{m-g-1}(z)$ in $(e_0, e_k)$, where $B_{m-g-1}$ is the polynomial from \refeq{eq:genPell}.
\end{theorem}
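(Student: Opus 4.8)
The plan is to read the statement as an instance of Abel's theorem on the hyperelliptic curve $\mathcal X\colon w^2=R(z)$, where $R(z)=-T(z)=(z-e_0)(z-f)\prod_{j=1}^g(z-e_j)(z-f_j)$ has $2g+2$ simple roots and $R<0$ exactly on the bands of $E$, so $\mathcal X$ has genus $g$. I would fix a homology basis in which $\mathfrak b_k$ encircles the $k$-th gap $[e_k,f_k]$, take $\omega_j=z^j\,dz/w$ ($0\le j\le g-1$) as the standard holomorphic differentials, and note that, up to an inessential scalar, $\int_{e_k}^{f_k}x^j\,dx/\sqrt{|T(x)|}$ is the $\mathfrak b_k$-period of $\omega_j$ while $\int_f^\infty x^j\,dx/\sqrt{|T(x)|}$ is $\int^{\infty^{+}}\!\omega_j$ along the real axis; the signs $(-1)^k$, $(-1)^{g+1}$ merely record which branch of $w$ these real integrals carry. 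Thus the right-hand side of the claimed identity is exactly the coordinate vector, in the basis formed by the $\mathfrak b_k$-periods together with the infinity-vector, of an element of the period lattice $\Lambda\subset\mathbf C^{g}$, its coordinates being $w_1,\dots,w_g,m$. The proof then reduces to (i) writing the left-hand side as an Abel-map image and (ii) identifying "$P_{2n}$ has an $(E,m)$-representation" with the membership of that image in $\Lambda$ together with the integrality and positivity of the coordinates.

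For necessity and (i): given $P_{2n}=A_m^2+TB_{m-g-1}^2$, set $\Pi=A_m+wB_{m-g-1}$ on $\mathcal X$, with hyperelliptic conjugate $\Pi^{\ast}=A_m-wB_{m-g-1}$, so $\Pi\Pi^{\ast}=A_m^2-RB_{m-g-1}^2=A_m^2+TB_{m-g-1}^2=P_{2n}$. The divisor of $\Pi$ has poles only at the points $\infty^{\pm}$ over $z=\infty$, of orders $m$ and $2n-m$, and a reduced effective zero divisor $D$ of degree $2n$; the hypothesis that the zeros of $A_m$ and of $(z-e_0)(z-f_1)\cdots(z-f_g)B_{m-g-1}$ are simple, interlace, and lie on $E$ is used here precisely to pin down the structure of $D$ — that it meets each fibre of $\mathcal X\to\mathbf C$ in at most one point and, crucially, that it is supported on the real ovals of $\mathcal X$ lying over the gaps and over $(f,\infty)$. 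Since $\Pi$ is a genuine meromorphic function, Abel's theorem gives $A(D)\equiv m\,A(\infty^{+})+(2n-m)\,A(\infty^{-})\pmod{\Lambda}$; collecting the $\infty$-terms on one side yields a lattice element whose coordinates in the chosen basis are integers, and a winding-number count of $\arg(\Pi/\Pi^{\ast})$ over $E\cap(e_0,e_k)$ shows that the $k$-th of these equals the number of zeros of $B_{m-g-1}$ in $(e_0,e_k)$, a positive integer $w_k$, the remaining coordinate being $m$. Finally the left-hand side is identified with $A(D)$ by a contour-integral computation: on $E$ one has $\ln P_{2n}=2\,\mathrm{Re}\,\ln\Pi$, the integrand $z^{j}\ln\Pi(z)/w(z)$ is $O(|z|^{\,j-g-1}\log|z|)$ at infinity and so has vanishing integral over a large circle for $0\le j\le g-1$; collapsing that circle onto $\partial E$ and onto the logarithmic cuts of $\ln\Pi$ — which, because $D$ lies over the gaps and the monodromy at infinity is routed out along $(f,\infty)$, run along the real ovals — turns the $\partial E$ contribution into $\tfrac1{2\pi}\int_E x^{j}\ln P_{2n}(x)\,dx/\sqrt{T(x)}$ and the cut contribution into $\sum_k(-1)^{k}w_k\int_{e_k}^{f_k}x^{j}\,dx/\sqrt{|T|}+(-1)^{g+1}m\int_f^{\infty}x^{j}\,dx/\sqrt{|T|}$. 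This proves necessity and the final assertion at once.

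For sufficiency: if the $w_1,\dots,w_g$ defined by the relations are positive integers, then together with $m$ they specify an element of $\Lambda$, and the converse half of Abel's theorem produces a meromorphic $\Pi$ on $\mathcal X$ with divisor $D-m\infty^{+}-(2n-m)\infty^{-}$ for a reduced $D\ge 0$ supported over the prescribed real locus. Its hyperelliptic-symmetric parts $A_m:=\tfrac12(\Pi+\Pi^{\ast})$ and $B_{m-g-1}:=\tfrac1{2w}(\Pi-\Pi^{\ast})$ are rational in $z$; the bounds on the pole orders of $\Pi$ force them to be polynomials of degrees $m$ and $m-g-1$, and $A_m^2-RB_{m-g-1}^2=\Pi\Pi^{\ast}=P_{2n}$, i.e. $P_{2n}=A_m^2+TB_{m-g-1}^2$. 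That all zeros of $A_m$ and of $(z-e_0)(z-f_1)\cdots B_{m-g-1}$ lie on $E$, are simple, and interlace then follows from the monotonic "Chebyshev/Markov" behaviour of $\arg(\Pi/\Pi^{\ast})$ along $E$ — it increases by $\pi$ across each band and advances by exactly $\pi w_k$ over each gap — which is where positivity of the $w_k$, and not merely their integrality, is indispensable.

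The main obstacle is the interface between the soft part (Abel's theorem: a function with the prescribed divisor exists iff a lattice condition holds) and the hard combinatorial content, which is concentrated in two steps: first, showing that the interlacing and simplicity hypotheses force the zero divisor $D$ of $\Pi$ onto the real ovals over the gaps and over $(f,\infty)$, so that the contour collapse produces honest real gap-integrals rather than integrals over complex cuts; and second, carrying out the winding-number computation that matches the lattice coordinates with the concrete counts $w_k$ and with $m$. In the converse direction one needs the same step read backwards — recovering the geometric interlacing from positivity of the $w_k$ — and this, together with the bookkeeping of the branches of $w$ and of $\ln\Pi$, is what will demand the most care.
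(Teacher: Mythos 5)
A preliminary remark: the paper itself gives no proof of Theorem~\ref{th:KLN} — it is quoted as a classical result of Krein, Levin and Nudelman \cite{KLN1990} and then only used in the Pell case $P_{2n}\equiv 1$ — so your proposal is not paralleling anything in the text but is an attempt at the classical theorem itself, via Abel's theorem on the hyperelliptic curve $w^2=-T(z)$ rather than the original function-theoretic argument. That route is viable in principle, and several ingredients are right: the gap and infinity integrals are (up to normalization) real periods of $z^j\,dz/w$, and on $E$ one indeed has $w$ purely imaginary and $\ln P_{2n}=2\,\mathrm{Re}\ln\Pi$ for $\Pi=A_m+wB_{m-g-1}$, since $|\Pi|^2=A_m^2+TB_{m-g-1}^2$ there.

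However, there is a genuine error at the crux of your argument. Because $\Pi\Pi^{\ast}=A_m^2+TB_{m-g-1}^2=P_{2n}$, the zero divisor $D$ of $\Pi$ lies over the zero set of $P_{2n}$: at a zero $(z_0,w_0)$ of $\Pi$ one gets $A_m(z_0)^2=-T(z_0)B_{m-g-1}(z_0)^2$, i.e.\ $P_{2n}(z_0)=0$ (the interlacing hypothesis only excludes a common zero of $A_m$ and $B_{m-g-1}$). These zeros need not be real: take $E=[-1,1]$ (so $g=0$, $T=1-z^2$), $A_2=z^2-\tfrac12$, $B_1=cz$ with small $c\neq0$; all conditions of an $(E,2)$-representation hold, yet $P_4=A_2^2+(1-z^2)B_1^2=(1-c^2)z^4-(1-c^2)z^2+\tfrac14$ has four non-real zeros. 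Hence your claim that $D$ is supported on the real ovals over the gaps and over $(f,\infty)$, and that the logarithmic cuts of $\ln\Pi$ therefore run along those ovals, is false, and the contour-collapse step as you describe it does not yield the stated identity. The correct deformation must attach cuts to the (generally complex) zeros of $P_{2n}$, expressing the left-hand side as incomplete Abelian integrals to those zeros; only then does Abel's theorem for $\Pi$ convert this into an integer combination of periods, and identifying those integers with the counts of zeros of $B_{m-g-1}$ in $(e_0,e_k)$ — and proving their positivity — is the argument-principle/interlacing analysis that your sketch only gestures at. The same misplacement undermines your sufficiency argument: there $D$ must be constructed as a lifting (choice of sheet) of the zeros of $P_{2n}$ to the curve, for otherwise the function supplied by the converse of Abel's theorem does not satisfy $\Pi\Pi^{\ast}=P_{2n}$; and recovering simplicity, location in $E$, and interlacing of the zeros of $A_m$ and $(z-e_0)(z-f_1)\cdots(z-f_g)B_{m-g-1}$ from positivity of the $w_k$ is precisely the hard step left unproved.
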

We will consider the situation when $P_{2n}(x)=P_0(x)\equiv 1$. The equation \eqref{eq:genPell} then reduces to the Pell equation.
The intervals $(e_0, e_k)$ for $k=1, \dots, g$ are nested and the following relations are satisfied:
$$
|A_m(e_0)|=|A_m(f)|=|A_m(e_i)|=|A_m(f_j)|=1, \quad B_{m-g-1}\mid A_m'.
$$
Thus
we see that the numbers $w_k$ are the numbers of internal extremal points of the polynomial $A_m$ on the nested intervals, so these numbers are strictly increasing.

\subsection{Confocal quadrics in Euclidean space $\mathbf{R}^{d}$}

Let an ellipsoid be given by:
$$
\E\ :\ \frac{x_1^2}{a_1}+\dots+\frac{x_d^2}{a_d}=1,
\quad
0<a_1<a_2<\dots<a_d.
$$
The family of quadrics confocal with $\E$ is:
\begin{equation}\label{eq:confocald}
\Q_{\lambda}(x)=\frac {x_1^2}{a_1-\lambda}+\dots + \frac
{x_d^2}{a_d-\lambda}=1.
\end{equation}

For a point given by its Cartesian coordinates $x=(x_1, \dots, x_d)$, the Jacobi elliptic coordinates $(\lambda_1,\dots, \lambda_d)$ are defined as the solutions of the equation in $\lambda$: $ \Q_{\lambda}(x)=1$.
The correspondence between the elliptic and Cartesian coordinates is not injective, since points symmetric with respect to coordinate hyper-planes have equal elliptic coordinates.

The Chasles theorem states that almost any line $\ell$ in the space $\mathbf E^d$ is tangent to exactly $d-1$ non-degenerate
quadrics from the confocal family.
Moreover, any line $\ell'$ obtained from $\ell$ by a billiard reflection off a quadric from the confocal family \eqref{eq:confocald}
is touching the same $d-1$ quadrics. These $d-1$ quadrics are {\it the caustics} of a given billiard trajectory, see \cite{DragRadn2011book} and references therein. The existence of $d-1$ caustics is a geometric manifestation of integrability of billiards within quadrics.
If those quadrics are $\Q_{\alpha_1}$, \dots, $\Q_{\alpha_{d-1}}$, then
the Jacobi elliptic coordinates $(\lambda_1,\dots, \lambda_d)$ of any point on $\ell$ satisfy the
inequalities $\Pol(\lambda_j)\ge 0$, $j=1,\dots,d$, where
$$
\Pol(x)=(a_1-x)\dots(a_d-x)(\alpha_1-x)\dots(\alpha_{d-1}-x).
$$
Let $b_1<b_2<\dots<b_{2d-1}$ be constants such that
$$\{b_1,\dots,b_{2d-1}\}=\{a_1,\dots,a_d,\alpha_1,\dots,\alpha_{d-1}\}.$$
Here, clearly, $b_{2d-1}=a_d$.
The possible arrangements of the parameters $\alpha_1$, \dots, $\alpha_{d-1}$ of the caustics and the parameters $a_1$, \dots, $a_d$ of the confocal family can be obtained from the following lemma.

\begin{lemma}[\cite{Audin1994}]\label{lemma:audin}
If $\alpha_1<\alpha_2< \dots<\alpha_{d-1}$, then
$\alpha_j\in\{b_{2j-1},b_{2j}\}$, for $1\le j\le d-1$.
\end{lemma}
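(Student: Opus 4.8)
The plan is to encode the tangency of a confocal quadric to $\ell$ as a single discriminant polynomial, to identify it with $\Pol$, and then to locate each caustic parameter by a parity count of that polynomial's zeros against an auxiliary positivity set. Fix a direction vector $v=(v_1,\dots,v_d)$ and a point $p=(p_1,\dots,p_d)$ of $\ell$ in general position, and substitute $x=p+tv$ into $\Q_\lambda(x)=1$; clearing the denominators $\prod_k(a_k-\lambda)$ turns this into a quadratic $\tilde A(\lambda)t^2+2\tilde B(\lambda)t+\tilde C(\lambda)=0$ in $t$, with $\tilde A(\lambda)=\sum_i v_i^2\prod_{k\neq i}(a_k-\lambda)$, $\tilde B(\lambda)=\sum_i p_iv_i\prod_{k\neq i}(a_k-\lambda)$ and $\tilde C(\lambda)=\sum_i p_i^2\prod_{k\neq i}(a_k-\lambda)-\prod_k(a_k-\lambda)$. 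The line is tangent to $\Q_\lambda$ exactly when $D(\lambda):=\tilde B(\lambda)^2-\tilde A(\lambda)\tilde C(\lambda)$ vanishes. At $\lambda=a_i$ only the $i$-th summands survive and satisfy $\tilde B^2=\tilde A\tilde C$, so $D(a_i)=0$; since $\deg D=2d-1$ with leading coefficient $-|v|^2$, and since the hypotheses force the zeros of $D$ to be exactly $a_1,\dots,a_d,\alpha_1,\dots,\alpha_{d-1}$ and simple, one gets $D(\lambda)=|v|^2\Pol(\lambda)$ (which also reproves $\Pol(\rho)=\tilde B(\rho)^2/|v|^2\ge0$ for any zero $\rho$ of $\tilde C$, the positivity recorded before the lemma).

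First I would locate the real zeros of the auxiliary factors. Both $\tilde A(a_i)$ and $\tilde C(a_i)$ are positive multiples of $\prod_{k\neq i}(a_k-a_i)$, hence share the sign $(-1)^{i-1}$; inspecting in addition the leading terms and the limit at $-\infty$ shows that $\tilde A$ has a single zero $\mu_i$ in each $(a_i,a_{i+1})$ and no other real zero, while $\tilde C$ has a single zero $\rho_j$ in each of $(-\infty,a_1),(a_1,a_2),\dots,(a_{d-1},a_d)$. Thus $\tilde A\tilde C$ has degree $2d-1$, positive leading coefficient, and $2d-1$ simple zeros $z_1<\dots<z_{2d-1}$ — the merge of the $\mu_i$ and the $\rho_j$ — which interlace the $a_i$ so that $a_i\in(z_{2i-1},z_{2i})$ for $1\le i\le d-1$ and $a_d>z_{2d-1}$. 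Hence $\{\lambda:\tilde A(\lambda)\tilde C(\lambda)>0\}$ has exactly the $d$ components $(z_1,z_2),(z_3,z_4),\dots,(z_{2d-3},z_{2d-2}),(z_{2d-1},+\infty)$, the $j$-th of which contains $a_j$ and lies inside $(a_{j-1},a_{j+1})$ (with $a_0:=-\infty$).

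Next I would count the zeros of $D$ inside these components. A zero of $D$ forces $\tilde A\tilde C=\tilde B^2\ge0$, and in general position $\tilde B$ is nonzero at every $z_k$, so all $2d-1$ zeros of $D$ lie in the $d$ components above, on which $D=\tilde B^2-\tilde A\tilde C$. On a bounded component $D$ is positive at both endpoints, so has an even number of sign changes, hence an even number of zeros, in the interior; on the unbounded component $D$ is positive at $z_{2d-1}$ but tends to $-\infty$, so has an odd number of zeros. Since the $j$-th component already carries the single value $a_j$, the number $c_j$ of caustics it carries is odd for $1\le j\le d-1$ and even for $j=d$; combined with $c_j\ge0$ and $\sum_{j=1}^d c_j=d-1$ this forces $c_1=\dots=c_{d-1}=1$ and $c_d=0$. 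As both the components and the caustics are increasingly ordered, the one caustic in the $j$-th component is $\alpha_j$, so $\alpha_j\in(z_{2j-1},z_{2j})\subset(a_{j-1},a_{j+1})$. Thus $a_{j-1}<\alpha_j<a_{j+1}$ for $1\le j\le d-1$, and counting how many of the numbers $b_1,\dots,b_{2d-1}$ lie below $\alpha_j$ (namely $a_1,\dots,a_{j-1}$ or $a_1,\dots,a_j$ among the $a$'s, together with $\alpha_1,\dots,\alpha_{j-1}$) converts this into the assertion $\alpha_j\in\{b_{2j-1},b_{2j}\}$.

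The two bookkeeping steps are where the real work lies: showing that the positivity components of $\tilde A\tilde C$ straddle the $a_i$ exactly in the pattern above, and reading off the parity of the zero count of $D$ on each component from its endpoint behaviour. The latter is the crucial point, since it is what yields the bound $c_j\le1$ rather than merely $c_j\le2$, and I expect it to be the main obstacle. Finally I would dispose of non-generic lines — where some $p_i$ or $v_i$ vanishes — by a small perturbation, noting that the arrangement of $b_1,\dots,b_{2d-1}$ is locally constant as long as they stay distinct, which the hypotheses guarantee; the cases where a caustic collides with an $a_i$ or with another caustic are already excluded by the hypothesis $b_1<\dots<b_{2d-1}$.
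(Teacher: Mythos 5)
Your proposal is essentially correct, but it cannot be "the same as the paper's proof" because the paper gives no proof of this lemma: it is quoted with a citation to Audin (1994), where the statement is obtained within the algebro-geometric study of geodesics on quadrics (the real structure of the curve $y^{2}=\Pol(x)$ and its Jacobian), and the present paper simply imports the conclusion. What you supply instead is a self-contained elementary argument: identify the discriminant $D=\tilde{B}^{2}-\tilde{A}\tilde{C}$ of the restricted quadratic with $|v|^{2}\Pol$, locate the roots of $\tilde{A}$ (one in each $(a_{i},a_{i+1})$) and of $\tilde{C}$ (the Jacobi coordinates of $p$, one in each of $(-\infty,a_{1}),(a_{1},a_{2}),\dots,(a_{d-1},a_{d})$), and then pin exactly one caustic parameter in each bounded positivity component of $\tilde{A}\tilde{C}$ by the parity count coming from $D(z_{k})=\tilde{B}(z_{k})^{2}>0$ at the endpoints together with $\deg D=2d-1$ and negative leading coefficient. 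I checked the component structure ($a_{j}\in(z_{2j-1},z_{2j})$, the $j$-th component inside $(a_{j-1},a_{j+1})$), the parity forcing $c_{1}=\dots=c_{d-1}=1$, $c_{d}=0$, and the final translation to $\alpha_{j}\in\{b_{2j-1},b_{2j}\}$; all of this is sound, and as a bonus the same count re-derives that the $d-1$ caustic parameters are real and simple, and the positivity $\Pol(\lambda_{j})\ge 0$ quoted before the lemma. The one genericity claim stated too casually is that ``$\tilde{B}$ is nonzero at every $z_{k}$'': at a root $\mu_{i}$ of $\tilde{A}$ the value $\tilde{B}(\mu_{i})$ is unchanged when $p$ is replaced by $p+sv$, so it cannot be arranged by choosing the point $p$ in general position; it fails precisely when some $\alpha_{j}$ coincides with a root of $\tilde{A}$ (tangency at infinity, or a line lying on a confocal quadric), and removing it genuinely requires perturbing the line itself. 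Your closing continuity-and-perturbation remark does cover this case --- the caustic parameters are simple roots of a polynomial depending continuously on the line, and the hypothesis keeps the $b_{i}$ distinct so the arrangement survives the limit --- but you should say explicitly that the perturbation is invoked for this coincidence as well, not only when some $p_{i}$ or $v_{i}$ vanishes. With that caveat recorded, your elementary proof is a legitimate alternative to the citation: the paper's choice buys brevity, yours buys a self-contained verification in exactly the setting (real confocal families in $\mathbf{R}^{d}$) used later.
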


If $\ell$ is the line containing a segment of a billiard trajectory within $\E$, then $b_1>0$.

Along a billiard trajectory, the Jacobi elliptic coordinates satisfy:
$$
b_0=0\le\lambda_1\le b_1,
\quad
b_2\le\lambda_2\le b_3,
\quad\dots,\quad
b_{2d-2}\le\lambda_{d}\le b_{2d-1}.
$$
Moreover, along the trajectory, each Jacobi coordinate $\lambda_j$ fills the whole interval $[b_{2j-2},b_{2j-1}]$, with local extreme points being only the end-points of the interval.
Thus, $\lambda_j$ takes values $b_{2j-2}$ and $b_{2j-1}$ alternately and changes monotonously between them.
Let $\T$ be a periodic billiard trajectory and denote by $m_j$ the number of its points where $\lambda_j=b_{2j-2}$.
Based on the previous discussion, $m_j$ is also the number of the points where $\lambda_j=b_{2j-1}$.

Notice that the value $\lambda_1=0$ corresponds to an impact with the boundary ellipsoid $\E$,
value $\lambda_j=\alpha_{k}$ corresponds to a tangency with the caustic $\Q_{\alpha_k}$, and
$\lambda_j=a_k$ corresponds to an intersection with the coordinate hyperplane $x_k=0$.
Since each periodic trajectory intersects any hyperplane even number of times, we get that $m_j$ must be even whenever $b_{2j-2}$ or $b_{2j-1}$ is in the set $\{a_1,\dots, a_d\}$. We can reformulate the last statement and say
\begin{lemma}[see \cite{DragRadn2018}]\label{lemma:odd}
If $m_j$ is odd then both $b_{2j-2}$ and $b_{2j-1}$ are in the set $\{\alpha_1,\dots, \alpha_{d-1}\}$.
\end{lemma}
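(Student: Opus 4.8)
The plan is to argue by contraposition: assume that at least one of the two interval endpoints $b_{2j-2}$, $b_{2j-1}$ belongs to the set $\{a_1,\dots,a_d\}$, and deduce that $m_j$ must be even. The geometric input is already assembled in the paragraph preceding the statement, so the proof is essentially a bookkeeping argument built on it. First I would recall the dictionary between special values of the Jacobi coordinate $\lambda_j$ and geometric events along the trajectory $\T$: the value $\lambda_j=a_k$ signifies that the trajectory point lies on the coordinate hyperplane $x_k=0$, the value $\lambda_j=\alpha_k$ signifies tangency with the caustic $\Q_{\alpha_k}$, and (for $j=1$) the value $\lambda_1=0$ signifies impact with $\E$. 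Thus if $b_{2j-2}=a_k$ (or $b_{2j-1}=a_k$) for some $k$, then each of the $m_j$ points counted by the definition — the points where $\lambda_j$ attains that endpoint — is a crossing of the hyperplane $x_k=0$.

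Next I would invoke the topological parity observation: a closed curve in $\mathbf{R}^d$ that is transverse to the hyperplane $x_k=0$ must cross it an even number of times, because each crossing flips the sign of the $k$-th coordinate and the curve returns to its starting point. Here transversality is guaranteed by the stated fact that along the trajectory each $\lambda_j$ reaches $b_{2j-2}$ and $b_{2j-1}$ only as genuine (non-degenerate) local extrema and changes monotonically in between, so the trajectory genuinely passes through $x_k=0$ rather than merely touching it tangentially. Therefore the number of points of $\T$ with $\lambda_j=b_{2j-2}$, which is $m_j$, is even; and by the symmetry already noted (the number of points with $\lambda_j=b_{2j-2}$ equals the number with $\lambda_j=b_{2j-1}$) the same conclusion holds if instead $b_{2j-1}\in\{a_1,\dots,a_d\}$. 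Either way $m_j$ is even, which is the contrapositive of the claim: if $m_j$ is odd, then neither $b_{2j-2}$ nor $b_{2j-1}$ lies in $\{a_1,\dots,a_d\}$, hence by Lemma~\ref{lemma:audin} and the exhaustive list $\{b_1,\dots,b_{2d-1}\}=\{a_1,\dots,a_d,\alpha_1,\dots,\alpha_{d-1}\}$ both must lie in $\{\alpha_1,\dots,\alpha_{d-1}\}$.

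The one point that needs a little care — and the only place a reader might object — is the edge case $j=1$, where $b_0=0$ is not one of the $a_k$ but corresponds instead to reflection off $\E$; I would note that $0\notin\{a_1,\dots,a_d\}$ since all $a_k>0$, so the argument is unaffected: when $j=1$ the relevant endpoint that could force even parity is $b_1$, and $b_1$ is an actual element of $\{a_1,\dots,a_d,\alpha_1,\dots,\alpha_{d-1}\}$, so the same crossing-parity reasoning applies verbatim. I expect this to be the main (indeed the only) obstacle, and it is minor; the rest is a direct transcription of the geometric facts recalled just above the lemma, so the proof will be short.
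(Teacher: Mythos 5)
Your argument is correct and is essentially the paper's own: the text establishes the lemma precisely by noting that $\lambda_j=a_k$ marks an intersection with the coordinate hyperplane $x_k=0$, that a periodic trajectory meets any hyperplane an even number of times, and then restating the resulting parity constraint in contrapositive form using $\{b_1,\dots,b_{2d-1}\}=\{a_1,\dots,a_d,\alpha_1,\dots,\alpha_{d-1}\}$ together with Lemma~\ref{lemma:audin}. Your additional care about transversality of the crossings and the $j=1$ endpoint $b_0=0$ merely fills in details the paper leaves implicit.
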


Following \cite{RR2014} (see also \cites{CRR2011, CRR2012}), we denote $m_0=n$, $m_d=0$, and call
$(m_0, m_1, \dots, m_{d-1})$
\emph{the winding numbers} of a given periodic billiard trajectory with period $n$. It was proven in \cite{DragRadn2018} that $m_0>m_1>\dots>m_{d-1}$. The proof was based on Theorem \ref{th:KLN} for $P_0(x)\equiv 1$ and the identifications of $g=d-1$ and $m_0=n=m$, $m_i=w_{d-i}$. Further, based on Lemma \ref{lemma:audin}, Theorem \ref{th:KLN}, and \cite{PS1999}, it was proven in \cite{DragRadn2018}, that a  partition $m_0>m_1>\dots>m_{d-1}$, with $m_{d-1}$ even and not two consecutive $m_i, m_{i+1}$ being both odd, uniquely determines the set of caustics of a given type, which generate periodic trajectories of period $m_0$ and winding numbers $m_i$.  We will refer to such partitions as the Euclidean billiard partitions.

\subsection{Weights}

 We want to go even a step further, and to assign a weight to a Euclidean billiard partition to count the number of possibilities for types of the caustics compatible with the given partition. The key information about the possible types of caustics
comes from  Lemma \ref{lemma:audin}. Namely, if all elements of a partition are even, then there are $2^{d-1}$ possible different choices of types of caustics. If the period is odd, then the number of choices is two times less, since in this case it has to be $b_1=\alpha_1$. In other words,
for odd-periodic trajectories, the caustic $\Q_{\alpha_1}$ has to be an ellipsoid. If any other winding number is odd, then two caustics have to be
of a given, fixed type. This means that an odd winding number distinct from the period reduces the number of choices for geometric types
of caustics four times. Thus we introduce the following weight function $\phi (n, d, \pi)$ for a given Euclidean billiard partition $\pi$ of length $d$ with the largest part equal $n$:
\begin{align}\label{eq:weights1}
\phi(2m, d, \pi)&=2^{d-1-2s};\\
\phi(2m+1, d, \pi)&=2^{d-2s},
\label{eq:weights2}
\end{align}
where $s$ is the total number of odd parts in $\pi$.

\begin{example}
Let $d=2$. Then $\phi(2m+1, 2, \pi)=1$ and $\phi( 2m, 2, \pi)=2$.
\end{example}
\begin{example} Let $d=3$.
For $n=4$, the only possible partition is $\pi=(4, 3, 2)$ and we have $\phi(4, 3, \pi)=1$.

For $n=5$, again there is only one partition $\pi=(5, 4, 2)$, for which $\phi(5, 3, \pi)=2$.

For $n=6$, the partitions are:
$$
\pi_1=(6, 5, 4), \quad
\pi_2=(6, 5, 2), \quad
\pi_3=(6, 4, 2), \quad
\pi_4=(6, 3, 2),
$$
and we have:
$$
\phi(6, 3, \pi_1)=\phi(6, 3, \pi_2)=\phi(6, 3, \pi_4)=1,
\quad
\phi(6, 3, \pi_3)=4.
$$

For $n=7$, the partitions are:
$$
\pi_1=(7, 6, 4), \quad
\pi_2=(7, 6, 2), \quad
\pi_3=(7, 4, 2),
$$
with
$$
\phi(7, 3, \pi_1)=\phi(7, 3, \pi_2)=\phi(7, 3, \pi_3)=2.
$$

\end{example}

In the next Section \ref{sec:ebp} we are going to study further combinatorial properties of such partitions, including the case of weighted partitions.

\section{Euclidean Billiard Partitions}\label{sec:ebp}

Let $\mathcal D$  denote the set of all integer partitions into distinct parts where
\begin{itemize}
\item [(E1)] the smallest part is even;
\item [(E2)] adjacent parts are never both odd.
\end{itemize}
Let $p_{\mathcal D}(n)$ denote the number of partitions of $n$ that are in $\mathcal D$.
\begin{align*}
1+\sum_{n\ge1}p_{\mathcal D}(n)q^n = 1&+q^2+q^4+q^5+2q^6+q^7+2q^8+3q^9+3q^{10}\\&+4q^{11}+4q^{12}+6q^{13}
+5q^{14}+9q^{15}+\dots .
\end{align*}
\begin{example}\label{ex:partnumber}
Thus, $p_{\mathcal D}(15)=9$, and the nine partitions of $15$ are $13+2$, $11+4$, $10+3+2$, $9+6$, $9+4+2$, $8+5+2$, $7+6+2$, $6+5+4$, $6+4+3+2$.
\end{example}
Additionally, we shall also need to consider weighting the partitions in $\mathcal D$ as follows. Suppose $\pi \in \mathcal D$ and that $\pi$ has $d$ parts with largest part $n$ and $s$ odd parts. The weight $\phi(n, d, \pi)$ is given by \eqref{eq:weights1} and \eqref{eq:weights2}.

Let  $p_{\mathcal D}(m, n)$ denote the number of partitions of $n$ in $\mathcal D$ with weight $m$. Then:
\begin{align*}
1+\sum_{n\ge1}p_{\mathcal D}(m, n)x^mq^n = 1&+q^2+q^4+q^5+(1+x)q^6+q^7+(1+x)q^8+3q^9\\&+(1+2x)q^{10}+(3+x)q^{11}+(1+2x+x^2)q^{12}\\&+(5+x)q^{13}
+(2+3x+x^2)q^{14}+(6+3x)q^{15}+\dots .
\end{align*}
Referring back to Example \ref{ex:partnumber}, we see that three partitions of $15$ have weight $1$, namely $2+4+9$, $2+6+7$, and $2+3+4+6$ while the remaining six have weight $0$. Thus yielding $(6+3x)$ as the coefficient of $q^{15}$.

Our object is to provide a closed form for these generating functions. To this end we begin by identifying a subset of $\mathcal D$ which we will call the irreducible partitions of  $\mathcal D$ and denote $\bar {\mathcal D}$. By irreducible we mean that $\pi \in  \bar {\mathcal D}$ if no summand of $\pi$ can be reduced by 2 with the resulting partition remaining in ${\mathcal D}$. For example $2+4+7$ is not  irreducible because
$2+4+(7-2)=2+4+5$ is still in ${\mathcal D}$. On the other hand $2+4+5$ is irreducible because $2+4+3$ destroys the order of the parts.

\begin{lemma} The elements of $\bar {\mathcal D}$ consists of those elements of ${\mathcal D}$   where
\begin{itemize}
\item [(IE1)] the smallest part is 2;
\item [(IE2)] adjacent parts are never both odd;
\item[(IE3)] the difference between adjacent parts is less or equal 2.
\end{itemize}
\end{lemma}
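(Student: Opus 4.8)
The claim characterizes the irreducible partitions $\bar{\mathcal D}$ by the three conditions (IE1)–(IE3). I would prove the two inclusions separately. The direction "$\bar{\mathcal D} \subseteq \{$partitions satisfying (IE1)–(IE3)$\}$" amounts to showing that if any of (IE1), (IE3) fails (note (IE2) is just (E2), which holds for every element of $\mathcal D$), then the partition is reducible, i.e.\ some part can be lowered by $2$ while staying in $\mathcal D$. The converse direction shows that if all three hold, then no single part can be lowered by $2$ without leaving $\mathcal D$.

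\textbf{Reducibility when (IE1) fails.} Suppose $\pi = (m_1 > m_2 > \dots > m_d) \in \mathcal D$ with $m_d \ge 4$ (it is even by (E1), so $m_d \ne 3$). I would lower the smallest part: $\pi' = (m_1, \dots, m_{d-1}, m_d - 2)$. Since $m_{d-1} > m_d > m_d - 2 \ge 2$, the parts remain distinct and positive; the smallest part $m_d - 2$ is still even, so (E1) holds; and lowering $m_d$ by $2$ preserves its parity, so no new pair of adjacent odd parts is created and (E2) is inherited. Hence $\pi' \in \mathcal D$ and $\pi$ is reducible.

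\textbf{Reducibility when (IE1) holds but (IE3) fails.} Now $m_d = 2$ and there is an index $i$ with $m_i - m_{i+1} \ge 3$ (using $m_{d+1} := m_d = 2$ is not needed since $m_d=2$ already). Choose the \emph{largest} such $i$, and attempt $\pi' = (m_1, \dots, m_{i-1}, m_i - 2, m_{i+1}, \dots, m_d)$. Distinctness holds because $m_i - 2 > m_{i+1}$ (as $m_i - m_{i+1} \ge 3$) and $m_i - 2 < m_i < m_{i-1}$; the smallest part is untouched so (E1) holds. For (E2): lowering $m_i$ by $2$ does not change its parity, so the only danger is if the resulting value $m_i-2$ together with a neighbor were both odd — but $m_i - 2$ has the same parity as $m_i$, and the pairs $(m_{i-1}, m_i)$ and $(m_i, m_{i+1})$ were already legal, so their parities are unchanged and (E2) survives. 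Thus $\pi' \in \mathcal D$ and $\pi$ is reducible. Actually one must be slightly careful: if $i = 1$ there is no $m_{i-1}$, which only makes the argument easier. Hence any $\pi \in \bar{\mathcal D}$ must satisfy (IE1), (IE2), (IE3).

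\textbf{Converse: (IE1)–(IE3) imply irreducibility.} Suppose $\pi$ satisfies all three. Take any part $m_i$ and consider $\pi' $ obtained by replacing $m_i$ with $m_i - 2$. If $i = d$, then $m_d = 2$ by (IE1), so $m_d - 2 = 0$ is not a valid part — $\pi'$ is not a partition into $d$ distinct positive parts of the same shape (or, if one allows dropping the part, the smallest part becomes $m_{d-1}$, which by (IE3) satisfies $m_{d-1} \le m_d + 2 = 4$ and $m_{d-1} > 2$, so $m_{d-1} = 3$ or $4$; if $3$ it violates (E1), and if $4$... ). Here I need to decide the precise convention for "reducing a summand": the text's example "$2+4+3$ destroys the order" suggests we keep the number of parts fixed and only forbid order violations and parity clashes, so reducing the last part $2 \to 0$ is simply disallowed as it is not a positive part. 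For $i < d$: by (IE3), $m_i - m_{i+1} \le 2$, and since $m_i \ne m_{i+1}$ we get $m_i - m_{i+1} \in \{1, 2\}$; then $m_i - 2 \le m_{i+1}$, which violates strict decrease (if $=m_{i+1}$, not distinct; if $<m_{i+1}$, out of order). So no part can be lowered, and $\pi$ is irreducible.

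\textbf{Main obstacle.} The only real subtlety is pinning down the exact definition of "a summand can be reduced by $2$ with the resulting partition remaining in $\mathcal D$" — in particular whether reducing forces us to keep $d$ parts, and what happens at the bottom end. The informal examples in the text ($2+4+7 \to 2+4+5$ allowed, $2+4+5 \to 2+4+3$ forbidden) make the intended reading clear, and once that is fixed each case is a short parity-and-order check; the argument is otherwise entirely elementary with no hidden difficulty.
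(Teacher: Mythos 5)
Your proof is correct and follows essentially the same route as the paper: you show that if the smallest part exceeds $2$ or some adjacent gap exceeds $2$, then that part can be lowered by $2$ while preserving distinctness and the parity condition (E2), which is exactly the paper's argument. The only difference is that you also spell out the easy converse (that (IE1)--(IE3) force irreducibility, including the convention at the smallest part $2$), which the paper leaves implicit; this is a completion of detail rather than a different approach.
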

\begin{proof} If $\pi\in \mathcal D$ and its smallest part is $\lambda \ne 2$. Since it is even, $\lambda \ge 4$.
Thus $\lambda-2$ is even and $\lambda-2\ge 2$. Hence, we have produced a new element of $\mathcal D$ by replacing $\lambda$ by $\lambda -2$.
Thus, it has to be $\lambda=2$.

Next, suppose $\mathcal D \ni\pi=\dots+\lambda_i + \lambda_{i+1}+\dots$ and $\lambda_i - \lambda_{i+1}>2$. Then the partition obtained by replacing
$\lambda_i$ by $\lambda_i-2$ is also in $\mathcal D$: $\lambda_i - 2> \lambda_{i+1}$ and $\lambda_i - 2$ and $\lambda_{i}$ are of the same parity.
Thus, consecutive parts in partitions in $\bar {\mathcal D}$ must differ by not more than 2.
\end{proof}

\begin{corollary}\label{corollary:decomp} Every partition  $\pi\in \mathcal D$ with $d$ parts can be uniquely represented by $\pi_1+\pi_2$ where $\pi_1\in \bar {\mathcal D}$ and $\pi_2$ is a partition with not more than $d$ parts each even and conversely.
\end{corollary}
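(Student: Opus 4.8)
I would prove this by exhibiting an explicit bijection rather than arguing abstractly about the reduction dynamics. Write $\pi=(\lambda_1>\lambda_2>\dots>\lambda_d)$ with $\lambda_d$ even. Define $\pi_1=(\mu_1>\mu_2>\dots>\mu_d)$ by a bottom‑up recursion: set $\mu_d=2$, and for $i=d-1,\dots,1$ let $\mu_i$ be the unique member of $\{\mu_{i+1}+1,\mu_{i+1}+2\}$ having the same parity as $\lambda_i$ (the two candidates have opposite parities, so this is well defined). Then set $\pi_2:=\pi-\pi_1$, that is $\nu_i:=\lambda_i-\mu_i$, discarding trailing zeros. Note that $\pi_1$ is, by construction, determined purely by $d$ and by which positions of $\pi$ carry odd parts.

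Next I would check that $\pi_1\in\bar{\mathcal D}$ by invoking the characterisation in the preceding lemma: the smallest part is $2$ (IE1), consecutive differences lie in $\{1,2\}$ (IE3), and since $\mu_i\equiv\lambda_i\pmod 2$ for all $i$ the odd parts of $\pi_1$ occupy exactly the positions of the odd parts of $\pi$, so property (E2) of $\pi$ transfers to (IE2) of $\pi_1$. Then I would check $\pi_2$: each $\nu_i=\lambda_i-\mu_i$ is even for the same parity reason; $\nu_d=\lambda_d-2\ge 0$ since $\lambda_d$ is a positive even integer by (E1); and $\nu$ is weakly decreasing because $\lambda_i-\lambda_{i+1}\ge\mu_i-\mu_{i+1}$. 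This last inequality is the one place where the hypotheses are genuinely used: if $\mu_i-\mu_{i+1}=1$ it is trivial, while if $\mu_i-\mu_{i+1}=2$ then $\mu_i\equiv\mu_{i+1}$, hence $\lambda_i\equiv\lambda_{i+1}$, and two distinct integers of the same parity differ by at least $2$. So $\nu_1\ge\dots\ge\nu_d\ge 0$, and $\pi_2$ is a partition into at most $d$ parts, all even.

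For the converse I would take $\pi_1=(\mu_1>\dots>\mu_d)\in\bar{\mathcal D}$ (with $d$ parts) and a partition $\pi_2$ with at most $d$ even parts, pad it to $(\nu_1\ge\dots\ge\nu_d\ge 0)$, and set $\pi:=\pi_1+\pi_2$. Then $\lambda_d=2+\nu_d>0$ forces $d$ parts; $\lambda_i-\lambda_{i+1}=(\mu_i-\mu_{i+1})+(\nu_i-\nu_{i+1})\ge 1$ gives distinctness and the strict ordering; $\lambda_d$ is even, giving (E1); and $\lambda_i\equiv\mu_i\pmod 2$ turns (IE2) of $\pi_1$ into (E2) of $\pi$. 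Hence $\pi\in\mathcal D$ with $d$ parts.

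It remains to see that the two maps are mutually inverse. One direction is immediate from $\nu_i=\lambda_i-\mu_i$. For the other, the recursion defining $\pi_1$ from $\pi$ reads off only $d$ and the parities of the $\lambda_i$, which equal the parities of the $\mu_i$; and any $\pi_1\in\bar{\mathcal D}$ with $d$ parts has smallest part $2$ (IE1) and consecutive differences in $\{1,2\}$ (IE3), so it literally satisfies that recursion relative to its own parities and is reconstructed unchanged. The only subtlety — and the point I would flag explicitly — is that $\pi_1$ must be taken with \emph{exactly} $d$ parts: for instance $(4,2)$ equals both $(4,2)+\emptyset$ and $(2)+(2,2)$, so allowing a shorter $\pi_1$ would break uniqueness; the statement is to be read, consistently with the reduction description preceding it (which never changes the number of parts), with $\pi_1$ having the same number of parts as $\pi$. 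Beyond getting this bottom‑up recursion right, I expect no real obstacle — the bulk of the work is the routine parity bookkeeping in the middle paragraphs.
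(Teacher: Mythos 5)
Your proof is correct and follows essentially the same route as the paper's: the same bottom-up, parity-driven construction of $\pi_1$ (smallest part $2$, each next part equal to $\mu_{i+1}+1$ or $\mu_{i+1}+2$ chosen to match the parity of the corresponding part of $\pi$) with $\pi_2=\pi-\pi_1$, and the same inequality checks showing $\pi_2$ is a partition into even parts. Your additional points --- verifying $\pi_1\in\bar{\mathcal D}$ via the characterisation lemma, spelling out the converse and mutual-inverse step, and insisting that $\pi_1$ be taken with exactly $d$ parts for uniqueness --- merely make explicit what the paper's proof leaves implicit.
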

\begin{example}
Consider $9+4+2\in \mathcal D$: $ 9+4+2=(5+4+2)+(4+0+0).$
\end{example}

\begin{proof} Clearly any partition of the given form $\pi_1+\pi_2$ is in $\mathcal D$. Now suppose we are given
$$
\pi: \lambda_1+\lambda_2+\dots+\lambda_d\in \mathcal D,
$$
with each $\lambda_i>\lambda_{i+1}$. Since $\lambda_d$ must be even, say $\lambda_d=2j$ we set the smallest part of $\pi_1$ as 2 and the smallest part of $\pi_2$ as $2j-2=\lambda_d-2$. If $\lambda_{d-1}$ is odd, then the next part of $\pi_1$ is 3 and that of $\pi_2$ is $\lambda_{d-1}-3$.
Observe that $\lambda_{d-1}-3\ge\lambda_d-2$ since $\lambda_{d-1}>\lambda_d$. If $\lambda_{d-1}$ is even, then the next part of $\pi_1$ is 4 and that of $\pi_2$ is $\lambda_{d-1}-4$.
Here $\lambda_{d-1}-4\ge\lambda_d-2$ since $\lambda_{d-1}-\lambda_d\ge 2$. We proceed thus up through the parts of $\pi$. At each stage there is exactly one choice for the part of $\pi_1$ depending on the parity of the original part, and thus
the resulting part of $\pi_2$ is also unique.
\end{proof}

One can notice that $\pi$ and its irreducible partition $\bar \pi$ have the same weight:
$$
\phi (n, d, \pi)=\phi (n, d, \bar \pi),
$$
since they have the same number of odd parts.

Let us denote by $s(d, n)$ the generating function for those partitions in $\bar {\mathcal D}$ that have exactly $d$ parts and largest part equal $n$.

\begin{example} Thus
\begin{align*}
s(5, 8) =& x^2q^{23}+x^2q^{25}+x^2q^{27}\\
=& x^{5-1-2\cdot 1}q^{2+3+4+6+8}+x^{5-1-2\cdot 1}q^{2+4+5+6+8}+ x^{5-1-2\cdot 1}q^{2+4+6+7+8}.
\end{align*}
\end{example}
Let us recall the Gaussian polynomials or $q$-binomial coefficients:
\begin{equation*}
\left[
\begin{array}{l}
A\\
B
\end{array}
\right]_q=
\begin{cases}
 0, & \text{if } B<0 \text{ or } B>A\\
\frac{(q;q)_A}{(q;q)_B(q;q)_{A-B}}, & 0\le B\le A
\end{cases}
\end{equation*}
and $(x;q)_N=(1-x)(1-xq)\cdots(1-xq^{N-1}).$

\begin{lemma}\label{lemma:sdn} The functions $s(d, m)$ can be expressed as follows, depending on parity of $m$:
\begin{itemize}
\item[a)]\begin{equation}\label{eq:d2n}
s(d, 2n)= x^{2n-d-1}q^{2n^2-2dn-n+d^2+2d}\left[
\begin{array}{c}
n-1\\
2n-d-1
\end{array}
\right]_{q^2};
\end{equation}
\item[b)]\begin{equation}\label{eq:d2n1}
s(d, 2n+1)=  x^{2n-d}q^{2n^2-2dn+d^2+3n}\left[
\begin{array}{c}
n-1\\
2n-d
\end{array}
\right]_{q^2}.
\end{equation}
\end{itemize}
\end{lemma}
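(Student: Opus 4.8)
The plan is to set up an explicit bijection between the partitions enumerated by $s(d,m)$ and lattice points in a rectangle, and then read off a Gaussian polynomial.

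First I would encode an irreducible partition $\pi:\ \lambda_d<\lambda_{d-1}<\dots<\lambda_1$ in $\bar{\mathcal D}$ with $d$ parts and largest part $m$ by its sequence of consecutive differences $\delta_i:=\lambda_{d-i}-\lambda_{d-i+1}\in\{1,2\}$, $i=1,\dots,d-1$, read upward from the smallest part $\lambda_d=2$; thus $\lambda_{d-i}=2+\delta_1+\dots+\delta_i$ and $m=2+\sum_i\delta_i$. Counting the $\delta_i$ equal to $2$ forces their number to be $t:=m-d-1$ and the number of $\delta_i$ equal to $1$ to be $2d-m$, so $s(d,m)=0$ unless $d+1\le m\le 2d$. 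The essential step is to reformulate (IE2). Since a step $\delta_i=1$ flips the parity of the running sum $\delta_1+\dots+\delta_i$ while a step $\delta_i=2$ preserves it, the adjacent parts $\lambda_{d-i+1}$ and $\lambda_{d-i}$ are both odd precisely when $\delta_i=2$ and the number of $1$'s among $\delta_1,\dots,\delta_{i-1}$ is odd; hence (IE2) is equivalent to saying that an even number of $1$'s precedes every $2$. If the $2$'s occur at positions $1\le i_1<\dots<i_t\le d-1$, the number of $1$'s before $i_r$ is $i_r-r$, so this condition reads $i_r\equiv r\pmod 2$, i.e.\ $i_r=r+2j_r$ with $0\le j_1\le j_2\le\dots\le j_t\le J$, where $J:=\lfloor(d-1-t)/2\rfloor$. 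Conversely every such difference sequence satisfies (IE1)--(IE3) and has largest part $m$, so this is a bijection onto weakly increasing sequences in $\{0,1,\dots,J\}^{t}$, i.e.\ onto the partitions fitting in a $t\times J$ box.

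Next I would compute the size. From $\lambda_{d-i}=2+\sum_{j\le i}\delta_j$ one gets $|\pi|=\sum_{i=0}^{d-1}\lambda_{d-i}=2d+\sum_{i=1}^{d-1}(d-i)\delta_i$, and separating the contribution of the indices with $\delta_i=2$ gives $|\pi|=2d+\binom d2+\sum_{r=1}^{t}(d-i_r)=C-2\sum_{r=1}^{t}j_r$ with $C:=2d+\binom d2+td-\binom{t+1}2$ a constant depending only on $d$ and $m$. Summing $q^{|\pi|}$ over all admissible sequences yields $\sum_\pi q^{|\pi|}=q^{C}\left[\begin{smallmatrix}t+J\\ t\end{smallmatrix}\right]_{q^{-2}}$, and the reflection identity $\left[\begin{smallmatrix}N\\ M\end{smallmatrix}\right]_{q^{-1}}=q^{-M(N-M)}\left[\begin{smallmatrix}N\\ M\end{smallmatrix}\right]_{q}$ converts this into $q^{\,C-2tJ}\left[\begin{smallmatrix}t+J\\ t\end{smallmatrix}\right]_{q^{2}}$. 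Specializing --- $m=2n$ gives $t=2n-d-1$, $J=d-n$; $m=2n+1$ gives $t=2n-d$, $J=d-n-1$; in both cases $t+J=n-1$ --- and simplifying $C-2tJ$ reproduces the binomial $\left[\begin{smallmatrix}n-1\\ 2n-d-1\end{smallmatrix}\right]_{q^2}$, resp.\ $\left[\begin{smallmatrix}n-1\\ 2n-d\end{smallmatrix}\right]_{q^2}$, and the exponents $2n^2-2dn-n+d^2+2d$ in \eqref{eq:d2n}, resp.\ $2n^2-2dn+d^2+3n$ in \eqref{eq:d2n1}.

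Finally I would justify the single power of $x$, i.e.\ that all partitions enumerated by $s(d,m)$ have the same number $s$ of odd parts. Grouping the parts into maximal runs of constant parity, (IE2) forces every odd run to be a single part; since $\lambda_d=2$ is even, every $1$-difference has exactly one odd endpoint, an odd part other than $\lambda_1$ is flanked by two $1$-differences, and an odd $\lambda_1$ accounts for exactly one. Equating with the total count $2d-m$ of $1$-differences gives $s=d-\lfloor m/2\rfloor=d-n$ irrespective of $\pi$, and plugging $s=d-n$ into \eqref{eq:weights1}--\eqref{eq:weights2} produces the factors $x^{\,2^{d-1-2s}}$, oops I mean the weight $2^{d-1-2s}=2^{2n-d-1}$ for $m=2n$ and $2^{d-2s}=2^{2n-d}$ for $m=2n+1$, hence the prefactors $x^{2n-d-1}$ and $x^{2n-d}$. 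The main obstacle is the reformulation of (IE2) in the second paragraph: turning ``no two adjacent odd parts'' into the clean parity condition $i_r\equiv r\pmod 2$ on the positions of the $2$'s, while correctly handling the boundary cases (an odd largest part; several consecutive $2$'s, which are permitted). After that the argument is bookkeeping: the box count giving the Gaussian polynomial is standard, and the identities for the $q$-exponent are a finite verification.
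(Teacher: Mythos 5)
Your proof is correct, but it takes a genuinely different route from the paper. The paper first establishes the initial values $s(1,2)=q^2$, $s(1,n)=0$ for $n\neq 2$, then the recurrences $s(d,2n)=q^{2n}\bigl(s(d-1,2n-1)+x\,s(d-1,2n-2)\bigr)$ and $s(d,2n+1)=q^{2n+1}s(d-1,2n)$ (tracking how the weight exponent changes when the largest part is attached), and finally verifies by induction on $d$ that the right-hand sides of \eqref{eq:d2n} and \eqref{eq:d2n1} satisfy the same recurrences, using the $q$-Pascal identity for Gaussian polynomials from Andrews' book. You instead derive the closed forms directly: you encode $\bar\pi\in\bar{\mathcal D}$ by its difference sequence in $\{1,2\}$, translate (IE2) into the condition that an even number of $1$-steps precedes every $2$-step, i.e.\ $i_r\equiv r\pmod 2$ for the positions of the $2$'s, and biject with partitions in a $t\times J$ box, which produces the Gaussian polynomial and (after the reflection identity and the computation of $C-2tJ$, both of which I checked and which do give $2n^2-2dn-n+d^2+2d$ and $2n^2-2dn+d^2+3n$) the stated $q$-exponents; separately you show that every such $\bar\pi$ has exactly $s=d-n$ odd parts, which justifies the single monomial $x^{2n-d-1}$, resp.\ $x^{2n-d}$. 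Your route buys an explanation of \emph{why} the formula holds --- the Gaussian polynomial literally counts the placements of the $2$-steps, and the constancy of the number of odd parts (not isolated explicitly in the paper) explains why $s(d,m)$ is a pure power of $x$ --- at the cost of the more delicate reformulation of (IE2) and the exponent bookkeeping, whereas the paper's inductive verification is shorter once the formulas are guessed. Two cosmetic points: the phrase ``$x^{\,2^{d-1-2s}}$'' should simply be deleted in favor of your corrected statement that the exponent of $x$ is $d-1-2s=2n-d-1$ (even case) and $d-2s=2n-d$ (odd case), consistent with the paper's convention visible in the example for $s(5,8)$; and it is worth stating explicitly that the bijection also covers the degenerate cases $t=0$ and $d=1$, and that the vanishing of the Gaussian polynomial outside $d+1\le m\le 2d$ matches your constraint on $t$, so the formulas hold for all $d,m$.
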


\begin{proof} We begin by noting initial values for $s(d, n)$. First
$$
s(1,2)=q^2,
$$
otherwise for $n\ne 2$:
$$
s(1, n)=0.
$$
We see that these initial values hold for the right hand sides of \refeq{eq:d2n} and   \refeq{eq:d2n1} as well. Thus, the lemma is proved for $d=1$ and all $n$.

For $d>1$ we have the following  recurrences for $s(d, n)$:
\begin{equation}\label{eq:2d2n}
s(d, 2n)= q^{2n}(s(d-1, 2n-1)+xs(d-1, 2n-2)),
\end{equation}
\begin{equation}\label{eq:2d2n1}
s(d, 2n+1)= q^{2n+1}s(d-1, 2n).
\end{equation}
These recurrences are immediate. The largest part is supplied by $q^{2n}$ in  \refeq{eq:2d2n} and by $q^{2n+1}$ in
\refeq{eq:2d2n1}. If the largest part is $2n$ then the next largest part must be either $2n-1$ or $2n-2$. If the largest part is $2n+1$ then the next largest part must be $2n$.

We must also keep track of how the weights change. Suppose $\bar \pi$ is a partition treated by $s(d-1, 2n-1)$ and its weight is $(d-1)-2s$;
when we attach $2m$ to $\bar \pi$, the weight will become $d-1-2s$, i.e. it does not change. Suppose $\bar \pi$ is a partition treated by $s(d-1, 2n-2)$ and its weight is $(d-1)-1-2s$;
when we attach $2m$ to $\bar \pi$, the weight will become $d-1-2s$, i.e. the weight has increased by $1$, thus accounting for the $x$ in the second term in \eqref{eq:2d2n}. Finally, suppose $\bar \pi$ is a partition treated by $s(d-1, 2n)$ and its weight is $(d-1)-1-2s$.
By attaching $2n+1$ to $\bar \pi$, the weight  becomes $d-2(s+1)$, i.e. the weight has not changed.

To conclude our proof by induction,  we must show that the right hand sides of \refeq{eq:d2n} and   \refeq{eq:d2n1} satisfy these same recurrences.

Namely for \refeq{eq:2d2n1} we see that replacing $d$ by $d-1$ in the right side of \refeq{eq:d2n} and multiplying the result by $q^{2n+1}$ yields
the right side of  \refeq{eq:d2n1}. For \refeq{eq:2d2n1} we must evaluate
\begin{align*}
&q^{2n}\Big(x^{2(n-1)-(d-1)} q^{2(n-1)^2-2(d-1)(n-1)+(d-1)^2+3(n-1)}\left[
\begin{array}{c}
n-2\\
2n-d-1
\end{array}
\right]_{q^2}
\\
&+
x\cdot x^{2(n-1)-(d-1)-1}q^{2(n-1)^2-2(d-1)(n-1)-(n-1)+(d-1)^2+2(d-1)}\left[
\begin{array}{c}
n-2\\
2n-d-2
\end{array}
\right]_{q^2}
\Big)\\
&=x^{2n-d-1}q^{2n^2-2dn-n+d^2+2d}\Big(q^{2(2n-d-1)}\left[
\begin{array}{c}
n-2\\
2n-d-1
\end{array}
\right]_{q^2}+\left[
\begin{array}{c}
n-2\\
2n-d-2
\end{array}
\right]_{q^2}
\Big)\\
&=x^{2n-d-1}q^{2n^2-2dn-n+d^2+2d}\left[
\begin{array}{c}
n-1\\
2n-d-1
\end{array}
\right]_{q^2},
\end{align*}
by \cite{Andrews}, p. 35, eq. (3.34). Thus, both recurrences  \refeq{eq:2d2n} and  \refeq{eq:2d2n1} are satisfied by the right hand sides of \refeq{eq:d2n} and   \refeq{eq:d2n1}. Finally, we see that \refeq{eq:d2n} and   \refeq{eq:d2n1} are valid by mathematical induction on $d$.
\end{proof}

Now we can formulate and prove the main result in this Section.
\begin{theorem}\label{th:generatingE} The generating function for the weighted Euclidean billiard partitions has the following formula:
\begin{equation*}
1+\sum_{n\ge1, m\ge 0}p_{\mathcal D}(m, n)q^n =1+\sum_{d=1}^\infty \sum_{n=0}^\infty \frac{s(d, n)}{(q^2;q^2)_d},
\end{equation*}
where
\begin{equation*}
s(d, 2n)= x^{2n-d-1}q^{2n^2-2dn-n+d^2+2d}\left[
\begin{array}{c}
n-1\\
2n-d-1
\end{array}
\right]_{q^2};
\end{equation*}
\begin{equation*}
s(d, 2n+1)= x^{2n-d}q^{2n^2-2dn-n+d^2+3n}\left[
\begin{array}{c}
n-1\\
2n-d
\end{array}
\right]_{q^2}.
\end{equation*}

\end{theorem}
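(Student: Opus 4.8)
The plan is to assemble the identity from the combinatorial decomposition in Corollary~\ref{corollary:decomp} together with the closed forms for $s(d,n)$ established in Lemma~\ref{lemma:sdn}. Every nonempty partition in $\mathcal D$ has a well-defined number of parts $d\ge 1$, so it suffices to compute, for each fixed $d$, the weighted generating function (in $q$ for the size and in $x$ for the weighting from \eqref{eq:weights1}--\eqref{eq:weights2}) of those $\mathcal D$-partitions that have exactly $d$ parts; the isolated $1$ on each side accounts for the empty partition, which is omitted from the $d\ge 1$ sum.

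First I would invoke Corollary~\ref{corollary:decomp}: the $\mathcal D$-partitions with exactly $d$ parts are in bijection with pairs $(\pi_1,\pi_2)$, where $\pi_1\in\bar{\mathcal D}$ has exactly $d$ parts and $\pi_2$ is a partition into at most $d$ even parts, the bijection being $\pi=\pi_1+\pi_2$ (sum of the parts in order). Since $|\pi|=|\pi_1|+|\pi_2|$, the $q$-grading is multiplicative under this decomposition. Moreover every part of $\pi_2$ is even, so $\pi$ and $\pi_1=\bar\pi$ have the same number of odd parts, hence by \eqref{eq:weights1}--\eqref{eq:weights2} the same weight; thus all the $x$-dependence sits on $\pi_1$ and is already recorded by $s(d,n)$. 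Therefore the weighted generating function of $\mathcal D$-partitions with exactly $d$ parts factors as $\bigl(\sum_{n\ge 0}s(d,n)\bigr)\cdot G_d(q)$, where $G_d(q)$ is the ordinary generating function for partitions into at most $d$ even parts (and where $\sum_{n\ge 0}s(d,n)$ is in fact a finite sum, since $\bar{\mathcal D}$ has only finitely many partitions with $d$ parts).

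Next I would evaluate $G_d(q)=1/(q^2;q^2)_d$: halving each part turns a partition into at most $d$ even parts into an arbitrary partition $\mu_1\ge\mu_2\ge\dots\ge\mu_d\ge 0$ into at most $d$ parts with $q$ replaced by $q^2$, and the generating function for partitions into at most $d$ parts is the standard $1/(q;q)_d$. Summing the factorization over $d\ge 1$ and adding $1$ for the empty partition then gives
\begin{equation*}
1+\sum_{n\ge 1,\, m\ge 0}p_{\mathcal D}(m,n)\,x^m q^n = 1+\sum_{d=1}^\infty\sum_{n=0}^\infty\frac{s(d,n)}{(q^2;q^2)_d}.
\end{equation*}
Substituting the explicit expressions for $s(d,2n)$ and $s(d,2n+1)$ from Lemma~\ref{lemma:sdn} into the right-hand side yields the asserted closed form.

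All of the genuinely combinatorial content --- the uniqueness of the $\pi_1+\pi_2$ decomposition and the recurrences evaluating $s(d,n)$ --- is already contained in the preceding corollary and lemma, so I do not expect a real obstacle; the only points needing care are the bookkeeping of weights under the decomposition (checking that adjoining even parts never alters the parity count, so that $\phi(n,d,\pi)=\phi(n,d,\bar\pi)$) and the observation that the index $n$ in $s(d,n)$ on the right is merely the largest part of the irreducible summand $\pi_1$, an internal summation variable, so that the largest part of $\pi$ itself need never be tracked separately.
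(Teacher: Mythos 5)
Your proposal is correct and follows essentially the same route as the paper: the unique decomposition $\pi=\pi_1+\pi_2$ from Corollary~\ref{corollary:decomp} with the weight carried entirely by the irreducible part, the factor $1/(q^2;q^2)_d$ generating partitions into at most $d$ even parts, and the closed forms from Lemma~\ref{lemma:sdn}. Your closing remark that the index $n$ in $s(d,n)$ is the largest part of $\pi_1$ (an internal summation variable) rather than of $\pi$ itself is a careful clarification of a point the paper's proof states somewhat loosely, but it is the same argument.
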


\begin{proof}
The formulas for $s(d, n)$ have already appeared in Lemma \ref{lemma:sdn}. To complete the proof we note that by the Corollary \ref{corollary:decomp}, the generation function for the partitions in $\mathcal D$ with $d$ parts and larges part $n$ is given by the product
$$
s(d, n)\frac{1}{(1-q^2)(1-q^4)\cdots (1-q^{2d})}=\frac{s(d,n)}{(q^2;q^2)_d},
$$
where the product
$$
\frac{1}{(q^2;q^2)_d}
$$
 generates the general partition into at most $d$ even parts.
\end{proof}

We note that the series given at the beginning of this Section follows directly from Theorem \ref{th:generatingE}, just by setting $x=1$, because
$$
\sum_{m\ge 0|} p_{\mathcal D}(m, n)=p_{\mathcal D}(n).
$$
In the next Section our interest concerns the instance $x=1$ in Theorem \ref{th:generatingE}.

\section{Quadrics and billiards in pseudo-Euclidean spaces and related partitions}\label{section:PE}
	
We consider pseudo-Euclidean space $\mathbb {R}^{k,l}$, with $d=k+l$.
For a given set of positive constants $a_1$, $a_2$, \dots, $a_d$, an
ellipsoid is given by:
\begin{equation}\label{eq:ellipsoid}
\E :\
\frac{x_1^2}{a_1}+\frac{x_2^2}{a_2}+\dots+\frac{x_d^2}{a_d}=1.
\end{equation}
Let us remark that equation of any ellipsoid in the pseudo-Euclidean
space can be brought into the canonical form (\ref{eq:ellipsoid})
using transformations that preserve the pseudo-Euclidean scalar product.

The family of quadrics confocal with $\mathcal{E}$ is:
\begin{equation}\label{eq:confocal}
\Q_{\lambda}\ :\
\frac{x_1^2}{a_1-\lambda} +\dots+ \frac{x_k^2}{a_k-\lambda} +
\frac{x_{k+1}^2}{a_{k+1}+\lambda} +\dots+
\frac{x_d^2}{a_d+\lambda}=1,\qquad\lambda\in\mathbf{R}.
\end{equation}

Unless stated differently, we are going to consider the
non-degenerate case, when set
$$\{a_1,\dots,a_k,-a_{k+1},\dots,-a_{d}\}$$ consists of $d$ different
values:
$$
a_1>a_2>\dots>a_k>0>-a_{k+1}>\dots>-a_d.
$$

For $\lambda\in\{a_1,\dots,a_k,-a_{k+1},\dots,-a_{d}\}$, the quadric
$\Q_{\lambda}$ is degenerate and it coincides with the
corresponding coordinate hyper-plane.

It is natural to join one more degenerate quadric to the family
(\ref{eq:confocal}): the one corresponding to the value
$\lambda=\infty$, that is the hyper-plane at the infinity. The pseudo-Euclidean version of Audin's Lemma \ref{lemma:audin} is the following result
from \cite{DragRadn2012adv}:

\begin{lemma}\label{th:parametri.kaustike}[\cite{DragRadn2012adv}]
	In pseudo-Euclidean space $\mathbf{E}^{k,l}$ consider a line intersecting ellipsoid $\E$ (\ref{eq:ellipsoid}).
	Then this line is touching $d-1$ quadrics from (\ref{eq:confocal}).
	If we denote their parameters by $\alpha_1$, \dots, $\alpha_{d-1}$ and take:
	\begin{gather*}
	\{b_1,\ \dots,\ b_p,\ c_1,\ \dots,\ c_q\}=\{\varepsilon_{1}a_1,\ \dots,\ \varepsilon_{d}a_d,\ \alpha_1,\ \dots,\ \alpha_{d-1}\},\\
	c_q\le\dots\le c_2\le c_1<0<b_1\le b_2\le\dots\le b_p,\quad p+q=2d-1,
	\end{gather*}
	we will additionally have:
	\begin{itemize}
		\item
		if the line is space-like, then $p=2k-1$, $q=2l$, $a_1=b_p$, $\alpha_i\in\{b_{2i-1},b_{2i}\}$ for $1\le i\le k-1$, and $\alpha_{j+k-1}\in\{c_{2j-1},c_{2j}\}$ for $1\le j\le l$;
		\item
		if the line is time-like, then $p=2k$, $q=2l-1$, $c_q=-a_d$, $\alpha_i\in\{b_{2i-1},b_{2i}\}$ for $1\le i\le k$, and $\alpha_{j+k}\in\{c_{2j-1},c_{2j}\}$ for $1\le j\le l-1$;
		\item
		if the line is light-like, then $p=2k$, $q=2l-1$, $b_p=\infty=\alpha_k$, $b_{p-1}=a_1$, $\alpha_i\in\{b_{2i-1},b_{2i}\}$ for $1\le i\le k-1$, and $\alpha_{j+k}\in\{c_{2j-1},c_{2j}\}$ for $1\le j\le l-1$.
	\end{itemize}
	Moreover, for each point on $\ell$ inside $\E$, there is exactly $d$ distinct quadrics from (\ref{eq:confocal}) containing it.
	More precisely, there is exactly one parameter of these quadrics in each of the intervals:
	$$
	(c_{2l-1},c_{2l-2}),\ \dots,\ (c_3,c_2),\ (c_1,0),\ (0,b_1),\ (b_2,b_3),\ \dots,\ (b_{2k-2},b_{2k-1}).
	$$
\end{lemma}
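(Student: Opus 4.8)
Here is how I would approach proving the stated lemma.

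\medskip

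\noindent\textbf{Setting up the Chasles polynomial.} The plan is to follow the classical analytic route and adapt Audin's argument behind Lemma~\ref{lemma:audin} to the pseudo-Euclidean signature, reading off all three causal cases from one sign count. First I would write the line as $\ell(t)=x^{0}+tv$ with $x^{0}\in\ell\cap\E$, and introduce the signature $\eta_i=+1$ for $1\le i\le k$, $\eta_i=-1$ for $k<i\le d$, together with $A_i=a_i$ ($i\le k$) and $A_i=-a_i$ ($i>k$), so that $A_1>\dots>A_k>0>A_{k+1}>\dots>A_d$, the confocal family reads $\Q_\lambda(x)=\sum_i\eta_i x_i^{2}/(A_i-\lambda)$, and $\langle u,w\rangle=\sum_i\eta_i u_i w_i$. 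Substituting $x=\ell(t)$ into $\Q_\lambda(x)=1$ yields a quadratic $a(\lambda)t^{2}+2b(\lambda)t+c(\lambda)=0$ with $a=\sum_i\eta_i v_i^{2}/(A_i-\lambda)$, $b=\sum_i\eta_i x_i^{0}v_i/(A_i-\lambda)$, $c=\sum_i\eta_i x_i^{0\,2}/(A_i-\lambda)-1$, and $\ell$ is tangent to $\Q_\lambda$ exactly when $b(\lambda)^2=a(\lambda)c(\lambda)$. By the Lagrange identity this is equivalent to $a(\lambda)=\sum_{i<j}\eta_i\eta_j M_{ij}^2/((A_i-\lambda)(A_j-\lambda))$ with $M_{ij}=x_i^{0}v_j-x_j^{0}v_i$, and clearing the common denominator $\prod_i(A_i-\lambda)$ gives
\[
\Phi(\lambda)=\sum_i\eta_i v_i^{2}\prod_{j\ne i}(A_j-\lambda)-\sum_{i<j}\eta_i\eta_j M_{ij}^{2}\prod_{l\ne i,j}(A_l-\lambda),
\]
whose roots are precisely the caustic parameters $\alpha_1,\dots,\alpha_{d-1}$. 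Its leading term is $(-1)^{d-1}\langle v,v\rangle\,\lambda^{d-1}$, so $\deg\Phi=d-1$ for a space-like or a time-like line, while for a light-like line the leading coefficient vanishes and $\deg\Phi\le d-2$; this degree drop is exactly the mechanism by which one caustic escapes to the hyperplane at infinity, i.e.\ $\alpha_k=\infty$.

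\medskip

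\noindent\textbf{Sign of $\Phi$ at the degenerate values.} Next I would evaluate $\Phi$ at each $A_k$ and study its behaviour near $\lambda=0$ and $\lambda=\pm\infty$. A direct computation gives
\[
\Phi(A_k)=\eta_k\,v_k^{2}\Big(\prod_{j\ne k}(A_j-A_k)\Big)\big(1-\Q^{\mathrm{red}}_{A_k}(P_k)\big),
\]
where $P_k=\ell\cap\{x_k=0\}$ and $\Q^{\mathrm{red}}_{A_k}$ is the confocal quadric of parameter $A_k$ inside the coordinate hyperplane $\{x_k=0\}$, viewed as a pseudo-Euclidean space of dimension $d-1$. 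Since $\prod_{j\ne k}(A_j-A_k)$ has the explicit sign $(-1)^{d-k}$, everything reduces to the sign of $\eta_k\big(1-\Q^{\mathrm{red}}_{A_k}(P_k)\big)$, i.e.\ to whether the trace of $\ell$ in $\{x_k=0\}$ lies inside or outside that quadric; I would determine this by induction on the dimension, applying Lemma~\ref{lemma:audin} and its lower-dimensional pseudo-Euclidean refinement to the hyperplane sections while tracking the alternating $\eta_k$. In parallel I would record that the rational function $g(\lambda):=\Phi(\lambda)/\prod_i(A_i-\lambda)=b(\lambda)^2-a(\lambda)c(\lambda)$ tends to $0$ as $\lambda\to\pm\infty$ with sign $-\sign\langle v,v\rangle$ at the right end, and that the line actually meeting $\E=\Q_0$ controls the behaviour near $\lambda=0$.

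\medskip

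\noindent\textbf{Interlacing and the Jacobi-coordinate statement.} Since the $\alpha_i$ are real (Chasles), $g$ is a rational function with $d$ simple real poles at the $A_i$, all its finite zeros real, and $g\to0$ at $\pm\infty$ with a definite sign at each end. Feeding the sign pattern at the poles together with the behaviour at $0$ and at $\infty$ into the intermediate value theorem forces a prescribed number of $\alpha_i$ in each interval $(A_{i+1},A_i)$ and in $(A_1,+\infty)$, $(-\infty,A_d)$; matching these counts with $\deg\Phi$ gives $\alpha_i\in\{b_{2i-1},b_{2i}\}$ on the positive side and the analogous statement on the negative side, with the split $p+q=2d-1$ dictated by the sign of $\langle v,v\rangle$. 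The boundary identities — $a_1=b_p$ in the space-like case, $c_q=-a_d$ in the time-like case, and $b_{p-1}=a_1$, $b_p=\infty=\alpha_k$ in the light-like case — then come out of the sign of $g$ at the two extreme poles combined with the vanishing or non-vanishing of the leading coefficient. The concluding assertion, that every point $x\in\ell$ inside $\E$ carries exactly one Jacobi coordinate in each of the $d$ listed intervals, is the companion statement for $f_x(\lambda)=\Q_\lambda(x)-1$: it has a simple pole at each $A_i$, equals $-1$ as $\lambda\to\pm\infty$ and equals $\Q_0(x)-1<0$ at $\lambda=0$, and the interlacing of the $A_i$ with the $\alpha_i$ just obtained pins down exactly which gaps between consecutive merged values carry a sign change of $f_x$.

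\medskip

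\noindent\textbf{Expected main obstacle.} The weight of the argument sits in the second step: (i) showing that each auxiliary point $P_k$ lies on the prescribed side of $\Q^{\mathrm{red}}_{A_k}$, which needs a clean inductive use of Audin's lemma on the coordinate hyperplanes and careful handling of the pseudo-Euclidean signs around $\lambda=0$; and (ii) the light-like case, where $\langle v,v\rangle=0$ forces $a(\lambda)=O(\lambda^{-2})$ at infinity, so the asymptotic sign analysis must be pushed one order further to conclude $\alpha_k=\infty$ and $b_{p-1}=a_1$ rather than $b_p=a_1$.
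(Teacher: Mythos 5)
A preliminary remark on the comparison itself: the paper does not prove this lemma at all --- it is stated as a quotation from \cite{DragRadn2012adv} --- so your attempt can only be measured against the argument in that reference, which is indeed a pseudo-Euclidean refinement of the discriminant/sign analysis behind Lemma~\ref{lemma:audin}. Your setup matches that line of attack and is correct as far as it goes: parametrizing $\ell$, writing tangency to $\Q_\lambda$ as vanishing of $b(\lambda)^2-a(\lambda)c(\lambda)$, clearing denominators to get a polynomial $\Phi$ of degree at most $d-1$ with leading coefficient $(-1)^{d-1}\langle v,v\rangle$, reading the light-like case off the degree drop, and the evaluation of $\Phi(A_k)$ in terms of the point $P_k=\ell\cap\{x_k=0\}$ (valid when $v_k\ne0$) are all fine.

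The genuine gap is at the load-bearing step, and as formulated it cannot be repaired in the way you suggest. The sign of $\Phi(A_k)$, equivalently the position of $P_k$ relative to $\Q^{\mathrm{red}}_{A_k}$, is not a universal quantity that can be ``determined'': it depends on the line, and its two possible values encode exactly the dichotomy $\alpha_i\in\{b_{2i-1},b_{2i}\}$ that the lemma asserts (already for $d=3$ in the Euclidean case, $\sign\Phi(a_1)$ is literally the statement of how many caustic parameters lie below $a_1$). Hence the proposed ``induction on dimension applying Lemma~\ref{lemma:audin} and its pseudo-Euclidean refinement'' is circular in spirit, and in any case the position of a single point $P_k$ with respect to one fixed quadric in a hyperplane is not a caustic statement to which such a lemma applies. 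Consequently the intermediate-value count in your third step presupposes sign data you do not have. A working proof must instead use constraints valid for every line of the given causal type: the equivalence $b^2-ac\ge0$ iff $\ell$ meets $\Q_\lambda$, the fact that $\ell$ meets $\E=\Q_0$, the asymptotic signs at $\pm\infty$, and signature arguments identifying which confocal quadrics a space-, time-, or light-like line cannot reach --- it is these that force the boundary identities $a_1=b_p$, $c_q=-a_d$, $b_{p-1}=a_1$, none of which follow from the data you list. Finally, the degenerate configurations that the statement explicitly admits (coincidences among the $b_i$ or $c_j$, lines with $v_k=0$ so that $P_k$ does not exist, non-simple roots of $\Phi$), as well as the precise count of exactly one Jacobi parameter in each of the $d$ listed intervals (which needs the confinement coming from the first part plus a root count, not just ``a sign change per gap''), are left untreated. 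So the proposal is a reasonable skeleton of the argument in \cite{DragRadn2012adv}, but its central and concluding steps are missing.
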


See \cite{DragRadn2012adv}, \cite{DragRadn2019}, \cite{ADR2019rcd} for more details about billiards in pseudo-Euclidean spaces.

We will distinguish tree types of partitions related to billiards in pseudo-Euclidean space $\mathbb R^{k,l}$, the space-type, the time-type,
and the light-type partitions. All of them are of the form  
$$(m_1, m_2,\dots, m_l|n_1, n_2, \dots, n_k),$$ 
such that
$$m_1> m_2>\dots> m_l,$$
and
$$n_1> n_2, \dots> n_k,$$
while there are no assumed relations between $n_i$ and $m_j$.

Let $\mathcal {SD}$  denote the set of all such {\it space-type integer partitions} into  parts
$$(m_1, m_2,\dots, m_l|n_1, n_2, \dots, n_k),$$
where
\begin{itemize}
\item [(S1)] the smallest part $n_k$ is even;
\item [(S2)] adjacent parts $m_i, m_{i+1}$ are never both odd as well as adjacent parts $n_j, n_{j+1}$.
\end{itemize}
Let $p_{\mathcal {SD}}(n)$ denote the number of space-type partitions of $n=n_1+m_1$ that are in $\mathcal {SD}$.

Let $\mathcal {TD}$  denote the set of all such {\it time-type integer partitions} into  parts
$$(m_1, m_2,\dots, m_l|n_1, n_2, \dots, n_k),$$
where
\begin{itemize}
\item [(T1)] the smallest part $m_l$ is even;
\item [(T2)] adjacent parts $m_i, m_{i+1}$ are never both odd as well as adjacent parts $n_j, n_{j+1}$.
\end{itemize}
Let $p_{\mathcal {TD}}(n)$ denote the number of space-type partitions of $n=n_1+m_1$ that are in $\mathcal {TD}$.

Let $\mathcal {LD}$  denote the set of all such {\it light-type integer partitions} into  parts
$$(m_1, m_2,\dots, m_l|n_1, n_2, \dots, n_k),$$
where
\begin{itemize}
\item [(L1)] the smallest parts $m_l$ and $n_k$ are even;
\item [(L2)] adjacent parts $m_i, m_{i+1}$ are never both odd as well as adjacent parts $n_j, n_{j+1}$.
\end{itemize}
Let $p_{\mathcal {LD}}(n)$ denote the number of space type partitions of $n=n_1+m_1$ that are in $\mathcal {LD}$.

Let us denote by $\tilde s(n_1, k)$ the generation function  of reduced partitions
$$n_1> n_2>\dots> n_k,$$
such that any two adjacent parts  $n_i, n_{i+1}$ are never both odd, but without assuming anything about the parity of $n_k$.
In the sequel, we use the instance $x=1$ of Theorem \ref{th:generatingE} and of the notions from that statement.
\begin{lemma} The generating function $\tilde s $ satisfies the following
$$
\tilde s(n_1, k)= s(n_1, k) + qs(n_1, k-1).
$$
\end{lemma}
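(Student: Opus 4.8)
The plan is to partition the set of reduced partitions counted by $\tilde s(n_1,k)$ according to the parity of the smallest part $n_k$, and then match each class with a known generating function. Recall that $\tilde s(n_1,k)$ counts \emph{reduced} (i.e.\ irreducible in the sense of $\bar{\mathcal D}$) partitions $n_1>n_2>\dots>n_k$ with no two adjacent parts both odd, where reduction is measured relative to the relevant constraint set. A reduced partition here has smallest part either $2$ (if one insists on the Euclidean-type condition that $n_k$ be even) or $1$ or $2$ (if one drops the parity constraint on $n_k$). So the natural split is: either $n_k$ is even, in which case the smallest part of a reduced such partition must be exactly $2$, and these are precisely the partitions enumerated by $s(n_1,k)$; or $n_k$ is odd, in which case the smallest part of a reduced such partition must be exactly $1$.

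The key step is then to handle the odd case by a bijection. Given a reduced partition $n_1>n_2>\dots>n_k$ with $n_k$ odd and (by reducedness) $n_k=1$, I would delete the part $n_k=1$ to obtain $n_1>n_2>\dots>n_{k-1}$. Because the original partition had no two adjacent parts both odd, $n_{k-1}$ must be even, and one checks the remaining partition is again reduced in the appropriate sense, hence counted by $s(n_1,k-1)$; conversely, appending a $1$ to any partition counted by $s(n_1,k-1)$ (whose smallest part is the even number $2$, so the adjacency condition is preserved) yields a reduced partition with smallest part $1$. This bijection accounts for a single unit being added to the total sum, which is exactly the factor $q^1=q$ multiplying $s(n_1,k-1)$; note the largest part $n_1$ is unchanged by the bijection, so the first argument is preserved. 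Adding the two contributions gives $\tilde s(n_1,k)=s(n_1,k)+q\,s(n_1,k-1)$.

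The main thing to verify carefully — and what I expect to be the only genuine obstacle — is that reducedness is preserved under the delete/append operation, i.e.\ that one cannot reduce a part by $2$ in the shortened partition precisely when one cannot in the original, and vice versa. Concretely: after deleting the terminal $1$, the new smallest part is $2$ (it was forced even, and in a reduced configuration the gap from the $1$ was at most... here one must use that the input partition is reduced \emph{with respect to the no-parity constraint}, which forces consecutive differences $\le 2$ except possibly around the last part, so $n_{k-1}=2$ or $3$; parity kills $3$, leaving $n_{k-1}=2$). Then the shortened partition satisfies (IE1)–(IE3) from the Lemma characterizing $\bar{\mathcal D}$, hence is counted by $s(n_1,k-1)$ at $x=1$. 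The converse direction is immediate since prepending $1$ below a smallest part of $2$ keeps all differences $\le 2$ and introduces no adjacent odd pair. Once this bookkeeping is pinned down, the identity follows by simply adding the generating functions of the two disjoint cases.
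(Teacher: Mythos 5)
Your proposal is correct and follows essentially the same route as the paper: split according to the parity of $n_k$, note that in the even case the partition is exactly one counted by $s(n_1,k)$, and in the odd case reducedness forces $n_k=1$ (and parity plus the difference condition force $n_{k-1}=2$), so deleting the $1$ gives the factor $q\,s(n_1,k-1)$. Your extra care in checking that reducedness is preserved under deleting/appending the part $1$ is a worthwhile elaboration of what the paper leaves implicit, but it is the same argument.
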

\begin{proof}
We consider two cases depending on parity of $n_k$. If $n_k$ is even, than the partition belongs to $s(n_1, k)$. If $n_k$ is odd, it has to be $n_k=1$ and $n_{k-1}=2$. Thus the partition $n_1> n_2>\dots> n_{k-1}$ belongs to $s(n_1, k-1)$.
\end{proof}

\begin{theorem}\label{th:generatingPE} The generating functions for the pseudo-Euclidean billiard partitions have the following formulas:
\begin{itemize}
\item [(S)] for the space-type:
\begin{equation*}
1+\sum_{n\ge1}p_{\mathcal SD}(n)q^n =1+\sum_{k=1}^\infty \sum_{l=1}^\infty \sum_{n_1=0}^\infty \sum_{m_1=0}^\infty\frac{s(n_1, k)}{(q^2;q^2)_k}\frac{\tilde s(m_1, l)}{(q^2;q^2)_l};
\end{equation*}
\item [(T)] for the space-type:
\begin{equation*}
1+\sum_{n\ge1}p_{\mathcal TD}(n)q^n =1+\sum_{k=1}^\infty \sum_{l=1}^\infty \sum_{n_1=0}^\infty \sum_{m_1=0}^\infty\frac{\tilde s(n_1, k)}{(q^2;q^2)_k}\frac{s(m_1, l)}{(q^2;q^2)_l};
\end{equation*}
\item [(L)] for the space-type:
\begin{equation*}
1+\sum_{n\ge1}p_{\mathcal SD}(n)q^n =1+\sum_{k=1}^\infty \sum_{l=1}^\infty \sum_{n_1=0}^\infty \sum_{m_1=0}^\infty\frac{s(n_1, k)}{(q^2;q^2)_k}\frac{s(m_1, l)}{(q^2;q^2)_l}.
\end{equation*}
\end{itemize}
\end{theorem}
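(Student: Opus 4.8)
The plan is to use that, in every one of the three families, the two blocks $(m_1,\dots,m_l)$ and $(n_1,\dots,n_k)$ are entirely independent of one another: no relation is imposed between the $m_i$'s and the $n_j$'s, and each of the defining conditions (S1)--(S2), (T1)--(T2), (L1)--(L2) is the conjunction of a condition on the $m$-block alone and a condition on the $n$-block alone. Consequently the generating function of such partitions (with the exponent of $q$ recording the sum of all parts) factors as a product of a generating function for the admissible $n$-blocks with $k$ parts and a generating function for the admissible $m$-blocks with $l$ parts, summed over $k\ge1$ and $l\ge1$. So the theorem reduces to computing two ``one-block'' generating functions: one for a strictly decreasing sequence with no two adjacent parts both odd whose smallest part is \emph{even}, and one for the same kind of sequence but with \emph{no} constraint on the parity of the smallest part.

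The first of these is exactly the class $\mathcal D$ with a fixed number $r$ of parts. By Theorem~\ref{th:generatingE} at $x=1$ --- equivalently, directly from Corollary~\ref{corollary:decomp}, which writes such a sequence uniquely as an irreducible partition from $\bar{\mathcal D}$ plus a partition into at most $r$ even parts --- its generating function is $\sum_{r\ge1}\sum_{N\ge0} s(N,r)/(q^2;q^2)_r$, where $s(N,r)$ is the generating function of the irreducible blocks with $r$ parts and largest part $N$ and $1/(q^2;q^2)_r$ records the added even parts.

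For the second one-block generating function I would establish the exact analogue of Corollary~\ref{corollary:decomp}: a strictly decreasing sequence with no two adjacent parts both odd and unrestricted smallest part decomposes uniquely as $\pi_1+\pi_2$, where $\pi_1$ is a \emph{reduced} block of the same kind --- the objects counted by $\tilde s$ --- and $\pi_2$ is a partition into at most $r$ even parts. The proof is the same bottom-up peeling as in Corollary~\ref{corollary:decomp}: if the smallest part is even one proceeds verbatim with the bottom part of $\pi_1$ equal to $2$; if it is odd, then its neighbour must be even, the bottom two parts of $\pi_1$ are forced to be $1$ and $2$, and from there one again picks at each stage the least admissible value of the required parity strictly above the value chosen below it. Since adding even parts preserves strict monotonicity and all parities, this correspondence is a bijection, and therefore the generating function for this block is $\sum_{r\ge1}\sum_{N\ge0}\tilde s(N,r)/(q^2;q^2)_r$, where $\tilde s(N,r)=s(N,r)+q\,s(N,r-1)$ by the lemma preceding the theorem.

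It then remains to assemble the three cases. For the space-type partitions the $n$-block has even smallest part while the $m$-block is unrestricted, so we multiply the $s$-series for the $n$-block by the $\tilde s$-series for the $m$-block; for the time-type partitions the two roles are interchanged; for the light-type partitions both blocks have even smallest part, so we multiply two $s$-series. In each case the product of the two one-block series, summed over $k\ge1$ and $l\ge1$ and augmented by the constant term $1$ that occurs on both sides, is precisely the asserted closed form. The only ingredient not already present in the Euclidean treatment is the decomposition lemma for the unrestricted block, and since this is a routine variant of Corollary~\ref{corollary:decomp} I expect no real difficulty; the one point to handle with care is verifying that in the odd-smallest-part case the two bottom parts of $\pi_1$ are forced to equal $1$ and $2$, which is exactly where the hypothesis that adjacent parts are never both odd enters.
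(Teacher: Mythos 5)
Your proposal is correct and follows exactly the route the paper sets up: the paper states Theorem \ref{th:generatingPE} without writing out a proof, but its intended argument is precisely your factorization into the two independent blocks, the Corollary \ref{corollary:decomp}-style decomposition (reduced block plus at most $r$ even parts) applied to each block, and the preceding lemma $\tilde s(n_1,k)=s(n_1,k)+q\,s(n_1,k-1)$ for the block with unrestricted smallest part. The only ingredient you supply that the paper leaves implicit is the extension of Corollary \ref{corollary:decomp} to blocks whose smallest part may be odd, and you handle the one delicate point correctly (when the smallest part is odd, the no-two-adjacent-odd condition forces the bottom two parts of the reduced block to be $1$ and $2$).
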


The first author is preparing a paper that extensively develops the partition methods used in section 2.

\subsection*{Acknowledgment}
The research of V.~D.~and M.~R.~was supported by the Discovery Project \#DP190101838 \emph{Billiards within confocal quadrics and beyond} from the Australian Research Council and Project \#174020 \emph{Geometry and Topology of Manifolds, Classical Mechanics and Integrable Systems} of the Serbian Ministry of Education, Technological Development and Science. V.~D.~would like to thank Sydney Mathematics Research Institute and their International Visitor Program for kind hospitality.

\begin{bibdiv}
\addcontentsline{toc}{section}{References}
\begin{biblist}

\bib{ADR2019rcd}{article}{
	author={Adabrah, A.},
author={Dragovi\'c, V.},
author={Radnovi\'c, M.},
	title={Periodic Billiards within conics in the Minkowski plane and Akhiezer Polynomials},
journal={},
eprint={arXiv; 1906.0491}
}

\bib{Andrews}{book}{
    author={Andrews, George E.},
    title={The Theory of Partitions},
    publisher={Addison-Weslay},
    address={Readings},
     date={1976},

}

\bib{Audin1994}{article}{
    author={Audin, Mich\`ele},
    title={Courbes alg\'ebriques et syst\`emes int\'egrables:
g\'eodesiques des quadriques},
    journal={Exposition. Math.},
    volume={12},
    date={1994},
    pages={193--226}
}

\bib{CRR2011}{article}{
   author={Casas, Pablo S.},
   author={Ram\'\i rez-Ros, Rafael},
   title={The frequency map for billiards inside ellipsoids},
   journal={SIAM J. Appl. Dyn. Syst.},
   volume={10},
   date={2011},
   number={1},
   pages={278--324}
}

\bib{CRR2012}{article}{
   author={Casas, Pablo S.},
   author={Ram\'\i rez-Ros, Rafael},
   title={Classification of symmetric periodic trajectories in ellipsoidal
   billiards},
   journal={Chaos},
   volume={22},
   date={2012},
   number={2},
   pages={026110, 24}
}

\bib{DragRadn2011book}{book}{
    author={Dragovi\'c, Vladimir},
    author={Radnovi\'c, Milena},
    title={Poncelet Porisms and Beyond},
    publisher={Springer Birkhauser},
    date={2011},
    place={Basel}
}

\bib{DragRadn2012adv}{article}{
	author={Dragovi\'c, Vladimir},
	author={Radnovi\'c, Milena},
	title={Ellipsoidal billiards in pseudo-Euclidean spaces and relativistic quadrics},
	journal={Advances in Mathematics},
	volume={231},
	pages={1173--1201},
	date={2012}
}

\bib{DragRadn2018}{article}{
	author={Dragovi{\'c}, Vladimir},
	author={Radnovi{\'c}, Milena},
	title={Periodic ellipsoidal billiard trajectories and extremal polynomials},
journal={accepted: Comm. Math. Physics},
	volume={},
	date={},
	number={},
	pages={},	
	eprint={arXiv:1804.02515 [math.DS]}
}

\bib{DragRadn2019rcd}{article}{
	author={Dragovi\'{c}, Vladimir},
	author={Radnovi\'{c}, Milena},
	title={Caustics of Poncelet polygons and classical extremal polynomials},
	journal={Regul. Chaotic Dyn.},
	volume={24},
	date={2019},
	number={1},
	pages={1--35}
}

\bib{DragRadn2019}{article}{
	author={Dragovi{\'c}, Vladimir},
	author={Radnovi{\'c}, Milena},
	title={Periodic Billiards within pseudo-Euclidean quadrics and extremal polynomials},
	journal={in preparation},
	volume={},
	date={2019},
	pages={}
}

\bib{KLN1990}{article}{
	author={Kre\u\i n, M. G.},
	author={Levin, B. Ya.},
	author={Nudel\cprime man, A. A.},
	title={On special representations of polynomials that are positive on a
		system of closed intervals, and some applications},
	note={Translated from the Russian by Lev J. Leifman and Tatyana L.
		Leifman},
	conference={
		title={Functional analysis, optimization, and mathematical economics},
	},
	book={
		publisher={Oxford Univ. Press, New York},
	},
	date={1990},
	pages={56--114}
}

\bib{PS1999}{article}{
   author={Peherstorfer, F.},
   author={Schiefermayr, K.},
   title={Description of extremal polynomials on several intervals and their
   computation. I, II},
   journal={Acta Math. Hungar.},
   volume={83},
   date={1999},
   number={1-2},
   pages={27--58, 59--83}
}

\bib{RR2014}{article}{
			author={Ram\'\i rez-Ros, Rafael},
			title={On Cayley conditions for billiards inside ellipsoids},
			journal={Nonlinearity},
			volume={27},
			date={2014},
			number={5},
			pages={1003--1028}
		}

\end{biblist}
\end{bibdiv}

\end{document}